\documentclass[a4paper,12pt]{amsart}
\usepackage[hmarginratio={1:1},vmarginratio={1:1},heightrounded,textwidth=420pt]{geometry}

\usepackage{amsfonts,amsmath,amssymb,amsthm}
\usepackage{mathtools}
\usepackage{graphicx}
\usepackage{hyperref}
\usepackage{cleveref}
\usepackage[utf8]{inputenc}
\usepackage{url}
\usepackage{floatrow}
\usepackage{tabularx}
\usepackage{subcaption}
\newcolumntype{Y}{>{\centering\arraybackslash}X}
\pagestyle{plain}

\let\leq\leqslant
\let\geq\geqslant

\let\rho\varrho

\newcommand{\brac}[1]{{\left(#1\right)}}
\newcommand{\sbrac}[1]{{\left[#1\right]}}

\newcommand{\floor}[1]{{\left\lfloor #1 \right\rfloor}}
\newcommand{\ceil}[1]{{\left\lceil #1 \right\rceil}}

\makeatletter
\newcommand\ie{i.e\@ifnextchar.{}{.\@}}
\newcommand\etc{etc\@ifnextchar.{}{.\@}}
\newcommand\etal{et~al\@ifnextchar.{}{.\@}}
\makeatother

\newtheorem{theorem}{Theorem}

\newtheorem{lemma}[theorem]{Lemma}

\newtheorem{example}[theorem]{Example}

\newtheorem{definition}[theorem]{Definition}

\begin{document}

\thispagestyle{empty}

\title{A new lower bound for the on-line coloring of intervals with bandwidth}

\author[P.~Mikos]{Patryk Mikos}
\thanks{Research partially supported by NCN grant number 2014/14/A/ST6/00138.}

\address{
Theoretical Computer Science Department,%
\\Faculty of Mathematics and Computer Science,%
\\Jagiellonian University, Krak\'ow, Poland%
}
\email{mikos@tcs.uj.edu.pl}

\maketitle

\begin{abstract}
The on-line interval coloring and its variants are important combinatorial problems with many applications in network multiplexing, resource allocation and job scheduling.
In this paper we present a new lower bound of $4.1626$ for the competitive ratio for the on-line coloring of intervals with bandwidth which improves the best known lower bound of $\frac{24}{7}$.
For the on-line coloring of unit intervals with bandwidth we improve the lower bound of $1.831$ to $2$.
\end{abstract}

\section{Introduction}

An \emph{on-line coloring of intervals with bandwidth} is a two-person game, played in rounds by Presenter and Algorithm.
In each round Presenter introduces a new interval on the real line and its bandwidth - a real number from $\sbrac{0,1}$.
Algorithm assigns a color to the incoming interval in such a way that for each color $\gamma$ and any point $p$ on the real line, the sum of bandwidths of intervals containing $p$ and colored $\gamma$ does not exceed $1$.
The color of the new interval is assigned before Presenter introduces the next interval and the assignment is irrevocable.
The goal of Algorithm is to minimize the number of different colors used during the game, while the goal of Presenter is to maximize it.

An \emph{on-line coloring of unit intervals with bandwidth} is a variant of on-line coloring of intervals with bandwidth game in which all introduced intervals are of length exactly $1$.

In the context of various on-line coloring games, the measure of quality of a strategy for Algorithm is given by the competitive analysis.
A coloring strategy for Algorithm is \emph{$r$-competitive} if it uses at most $r \cdot c$ colors for any $c$-colorable set of intervals.
The \emph{absolute competitive ratio} for a problem is the infimum of all values $r$ such that there exists an $r$-competitive strategy for Algorithm for this problem.
Let $\chi_{A}(\mathcal{I})$ be the number of colors used by Algorithm $A$ on the set $\mathcal{I}$ of intervals with bandwidth, and $OPT\brac{\mathcal{I}}$ be the minimum number of colors required to color intervals in the set $\mathcal{I}$.

The \emph{asymptotic competitive ratio} for Algorithm $A$, denoted by $\mathcal{R}_{A}^{\infty}$, is defined as follows:
$$\mathcal{R}_{A}^{\infty} = \liminf \limits_{k \rightarrow \infty} \textstyle \{\frac{\chi_{A}(\mathcal{I})}{k} : OPT\brac{\mathcal{I}} = k \}$$
The \emph{asymptotic competitive ratio} for a problem is the infimum of all values $\mathcal{R}_{A}^{\infty}$ such that $A$ is an Algorithm for this problem.

In this paper we give lower bounds on competitive ratios for on-line coloring of intervals with bandwidth and for unit version of this problem.
We obtain these results by presenting explicit strategies for Presenter that force Algorithm to use many colors while the presented set of intervals is colorable with a smaller number of colors.

\subsection{Previous work}

A variant of on-line coloring of intervals with bandwidth in which all intervals introduced by Presenter have bandwidth $1$ is known as an on-line interval coloring.
The competitive ratio for this problem was established by Kierstead and Trotter~\cite{KiersteadT81}.
They constructed a strategy for Algorithm that uses at most $3\omega - 2$ colors on $\omega$-colorable set of intervals.
They also presented a matching lower bound -- a strategy for Presenter that forces Algorithm to use at least $3\omega - 2$ colors.
The unit variant of the on-line interval coloring was studied by Epstein and Levy~\cite{EpsteinL05_2}.
They presented a strategy for Presenter that forces Algorithm to use at least $\floor{\frac{3\omega}{2}}$ colors.
Moreover, they showed that a natural greedy algorithm uses at most $2\omega-1$ colors.

A variant of the on-line coloring of intervals with bandwidth in which all intervals have the same endpoints is known as the on-line bin packing, see \cite{Csirik98} for a survey.

On-line coloring of intervals with bandwidth was first posed in 2004.
Adamy and Erlebach \cite{AdamyE04} showed a $195$-competitive algorithm for this problem.
An improved analysis by Pemmaraju \etal~\cite{PemmarajuRV11} showed that Adamy-Erlebach algorithm has competitive ratio $35$.
Narayanaswamy \cite{Narayanaswamy04} and Azar \etal~\cite{AzarFLN06} presented a $10$-competitive algorithm,
while Epstein and Levy \cite{EpsteinL05} showed a lower bound of $\frac{24}{7}$ for the asymptotic competitive ratio in this problem.
On-line coloring of unit intervals with bandwidth was studied by Epstein and Levy \cite{EpsteinL05}.
They presented a lower bound of $2$ and upper bound of $\frac{7}{2}$ for the absolute competitive ratio in this problem.
For the asymptotic competitive ratio, they showed a $3.178$-competitive algorithm and a lower bound of $1.831$.

\subsection{Our result}

For the on-line coloring of intervals with bandwidth, we prove that the asymptotic competitive ratio is at least $4.1626$.
For the on-line coloring of unit intervals with bandwidth, we present an explicit strategy for Presenter that forces Algorithm to use at least $2k-1$ different colors while the presented set of intervals is $k$-colorable.

\newpage
\section{Interval coloring}

At first we recall a strategy proposed by Kierstead and Trotter for Presenter in the on-line interval coloring game.
We use this strategy as a substrategy in our main result.

\begin{theorem}[Kierstead, Trotter \cite{KiersteadT81}]\label{thm:KT_lower}
For every $\omega \in \mathbb{N}_{+}$,
there is a strategy for Presenter that forces Algorithm to use at least $3\omega-2$ different colors in the on-line interval coloring game played on a $\omega$-colorable set of intervals.
Moreover, Presenter can play in such a way that every introduced interval is contained in a fixed real interval $[L,R]$.
\end{theorem}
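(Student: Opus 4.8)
The plan is to argue by induction on $\omega$, proving a statement slightly stronger than the one asked so that the recursion closes. For the base case $\omega=1$ Presenter simply introduces a single interval inside $[L,R]$: the family is trivially $1$-colorable and Algorithm must use $1=3\cdot 1-2$ color. For the inductive step I would pass from a strategy forcing $3(\omega-1)-2=3\omega-5$ colors on an $(\omega-1)$-colorable family to one forcing three more colors while the maximum clique grows by only one.

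To keep the induction self-feeding I would strengthen the hypothesis to record not only the color count but also the final configuration, for instance a point $p\in[L,R]$ covered by $\omega$ pairwise overlapping intervals carrying $\omega$ distinct colors (a ``rainbow clique''), with all of the action confined to an arbitrarily short window of Presenter's choosing. This localization makes the ``moreover'' clause automatic: since every sub-strategy is invoked inside a subinterval that we pick, the whole play can be rescaled to sit inside the prescribed $[L,R]$.

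The engine of the inductive step is to run several independent copies of the $(\omega-1)$-strategy in pairwise disjoint subintervals $I_1,\dots,I_m\subset[L,R]$ and then to introduce \emph{bridging} intervals that span two or more of these regions. Because each copy keeps its region $(\omega-1)$-colorable, a bridge raises the multiplicity at any point it covers to at most $\omega$, so $\omega$-colorability is preserved; simultaneously, a bridge that meets the rainbow $(\omega-1)$-cliques produced in two different copies is forced to take a color distinct from every color active at both clique points. Choosing the widths and overlaps of the $I_j$ and of the bridges is where the increment of three must be squeezed out.

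The main obstacle, and the reason the true bound is $3\omega-2$ rather than something smaller, is that the on-line Algorithm may reuse in one region the colors it already spent in a disjoint region, so a naive two-region merge yields only one extra color instead of three. To defeat this I would play \emph{adaptively}: Presenter branches on the colors Algorithm actually commits, possibly laying down ``anchor'' intervals that span a region before the recursive sub-game is played there, so that attempted reuse across regions is itself converted into a freshly forced color. I expect the delicate part to be the joint bookkeeping that shows both (i) that exactly three new colors are extracted per unit increase of $\omega$, giving $3\omega-2$ in total, and (ii) that no point is ever covered by more than $\omega$ intervals, so that the presented family remains $\omega$-colorable throughout.
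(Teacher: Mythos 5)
This theorem is not proved in the paper at all: it is imported verbatim from Kierstead and Trotter and used as a black box, so there is no in-paper proof to compare against. Judged on its own terms, your proposal has the right high-level architecture (induction on $\omega$, many disjoint localized copies of the $(\omega-1)$-strategy, bridging intervals, adaptive branching on Algorithm's choices, and rescaling to handle the confinement to $[L,R]$), and this is indeed the shape of the actual Kierstead--Trotter argument. But the proposal contains a genuine gap: the entire content of the theorem is the mechanism by which \emph{three} new colors are extracted per unit increase of the clique number, and that mechanism is exactly what you defer (``is where the increment of three must be squeezed out,'' ``I expect the delicate part to be the joint bookkeeping''). A proof sketch that names the hard step but does not perform it is not a proof of this particular theorem, because the hard step is the theorem.

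To see concretely why the naive version of your plan falls short: a bridge that is forced to take a genuinely new color must intersect intervals of all $3\omega-5$ previously used colors, and since at any single point only $\omega-1$ colors are present, such a bridge must essentially span an entire copy of the $(\omega-1)$-construction, raising every clique in that copy to size $\omega$. A first bridge over copy $I_1$ yields one new color; a second bridge over a copy $I_2$ with the same color palette (this already requires the pigeonhole/adaptive step you only gesture at) can meet the first bridge in the empty gap between copies and yields a second new color. But a third bridge must span a third copy \emph{and} intersect both earlier bridges without creating an $(\omega+1)$-clique, and with bridges laid out left to right this forces it to run over an already-saturated copy. So the straightforward reading of your construction caps out at two new colors per level, i.e.\ roughly $2\omega$ colors overall, not $3\omega-2$. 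The Kierstead--Trotter construction escapes this with a carefully strengthened induction hypothesis that tracks where all previously forced colors live (not just an $\omega$-clique ``rainbow point'') together with a more intricate geometric gadget; your proposed strengthening to a single rainbow clique at a point $p$ does not record enough information for the bridges to be forced, and would need to be replaced before the induction closes.
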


\medskip
Below we present a strategy for Presenter in the on-line coloring of intervals with bandwidth.
For a fixed $k \in \mathbb{N}_{+}$, we ensure that at any point of the game, the set of intervals introduced by Presenter is $k$-colorable.

\begin{definition}
A pair of sequences $\brac{ \sbrac{j_1,\ldots,j_n}, \sbrac{x_1,\ldots,x_n} }$ such that $x_i \in \mathbb{N}_{+}$, $j_i|k$ and $\forall_{q<i}: j_q < j_i \leq \frac{1}{3}k$
is called a \emph{k-schema}.
\end{definition}

\medskip
Not every $k$-schema describes a strategy for Presenter.
Later we give additional conditions that a given $k$-schema has to satisfy to describe a valid strategy.

The strategy for Presenter based on a $k$-schema $\brac{ \sbrac{j_1,\ldots,j_n}, \sbrac{x_1,\ldots,x_n} }$ consists of 2 phases.
The first phase, called \emph{separation phase}, consists of $n$ subphases indexed $1,\ldots,n$.
Let  $\mathcal{M}$ be the set of \emph{marked intervals}, which initially is empty.
For each color $c$ used by Algorithm in the separation phase, the set $\mathcal{M}$ contains the first interval colored by Algorithm with $c$.

All intervals introduced by Presenter in the $i$-th subphase are contained in same region $[L_i,R_i]$, have length $s_i = \frac{1}{2}(R_i - L_i)$, and bandwidth $\frac{j_i}{k}$, see \Cref{Fig:Subphases}.
Let $l_i$ be the rightmost right endpoint of a non-marked interval introduced in the $i$-th subphase, or $l_i = L_i + s_i$ if such an interval does not exist.
Let $r_i$ be the leftmost right endpoint of a marked interval introduced in the $i$-th subphase, or $r_i = R_i$ if such an interval does not exist.
For the first subphase set $L_1 = 0$, $R_1 = 2$ and for the $i$-th subphase $L_i = l_{i-1}$ and $R_i = r_{i-1}$, see \Cref{Fig:Subphases}, where $l_{i-1}$ and $r_{i-1}$ are values of those variables after the end of the $\brac{i-1}$-subphase.

In the $i$-th subphase, Presenter introduces new intervals until gets exactly $x_i$ new colors.
Let $p_i = \frac{1}{2}(l_i + r_i)$.
A new interval introduced by Presenter has endpoints $I = [p_i - s_i, p_i]$.
If Algorithm colors $I$ with one of already used colors, then $l_i$ changes to $p_i$.
Otherwise, $r_i$ changes to $p_i$ and interval $I$ is marked.

Assume that Presenter constructs some coloring after each subphase.
Let $\Gamma_{i}$ be the number of colors used by Presenter in that coloring on intervals in the set $\mathcal{M}$ after $i$-th subphase, see \Cref{Fig:Optimum}.
In the second phase, called the \emph{final phase}, Presenter uses a strategy from \Cref{thm:KT_lower} in the real interval $[L_{n+1},R_{n+1}]$ for $\omega = k-\Gamma_{n}$.
Each interval introduced by Presenter in the final phase has bandwidth $1$.
This completes the description of a strategy for Presenter based on a $k$-schema.

\begin{figure}[H] \setlength{\unitlength}{0.09in}
\centering
\begin{picture}(51,18) 


\put(20,1.5){\line(1,0){26}}
\put(20,1){\line(0,1){1}}
\put(46,1){\line(0,1){1}}

\put(3,3){\line(1,0){26}}
\put(3,2.5){\line(0,1){1}}
\put(29,2.5){\line(0,1){1}}

\put(0.75,0.5){\dashbox{0.25}(0.01,4)}
\put(49.25,0.5){\dashbox{0.25}(0.01,4)}
\put(29,0.5){\dashbox{0.25}(0.01,4)}
\put(46,0.5){\dashbox{0.25}(0.01,4)}
\put(0,0){\dashbox{0.25}(51,7)}

\put(0.25,5){$L_{i-1}$}
\put(47.75,5){$R_{i-1}$}
\put(28.25,5){$l_{i-1}$}
\put(45,5){$r_{i-1}$}

\put(0.5,7.5){$(i-1)$-th subphase}
\put(46.25,1.5){*}


\put(34,9){\line(1,0){8}}
\put(34,8.5){\line(0,1){1}}
\put(42,8.5){\line(0,1){1}}

\put(32,10.5){\line(1,0){8}}
\put(32,10){\line(0,1){1}}
\put(40,10){\line(0,1){1}}

\put(30.5,12){\line(1,0){8}}
\put(30.5,11.5){\line(0,1){1}}
\put(38.5,11.5){\line(0,1){1}}

\put(31,13.5){\line(1,0){8}}
\put(31,13){\line(0,1){1}}
\put(39,13){\line(0,1){1}}

\put(29.5,8){\dashbox{0.25}(0.01,7)}
\put(45.5,8){\dashbox{0.25}(0.01,7)}
\put(40,8){\dashbox{0.25}(0.01,7)}
\put(39,8){\dashbox{0.25}(0.01,7)}
\put(28,7.5){\dashbox{0.25}(19,10)}

\put(28.5,15.5){$L_i$}
\put(44.5,15.5){$R_i$}
\put(38.5,15.5){$l_i$}
\put(39.5,15.5){$r_i$}

\put(29,18){$i$-th subphase}
\put(42.25,9){*}
\put(40.25,10.5){*}

\end{picture}
\caption{Intervals introduced in the $i$-th subphase in relation to the intervals introduced in the $\brac{i-1}$-th subphase. Marked intervals are marked with *.}
\label{Fig:Subphases}
\end{figure}
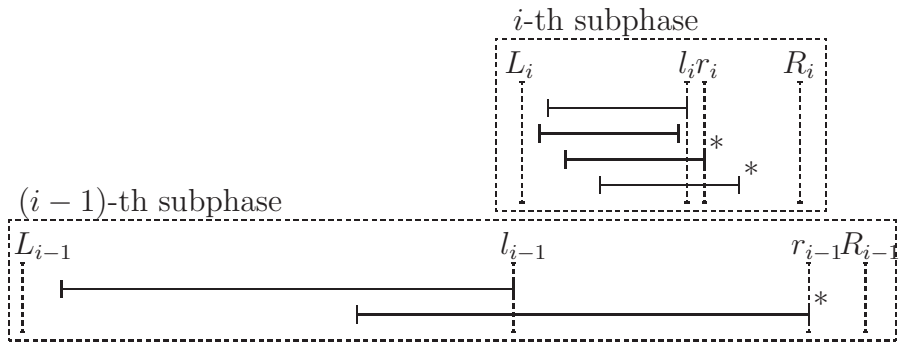

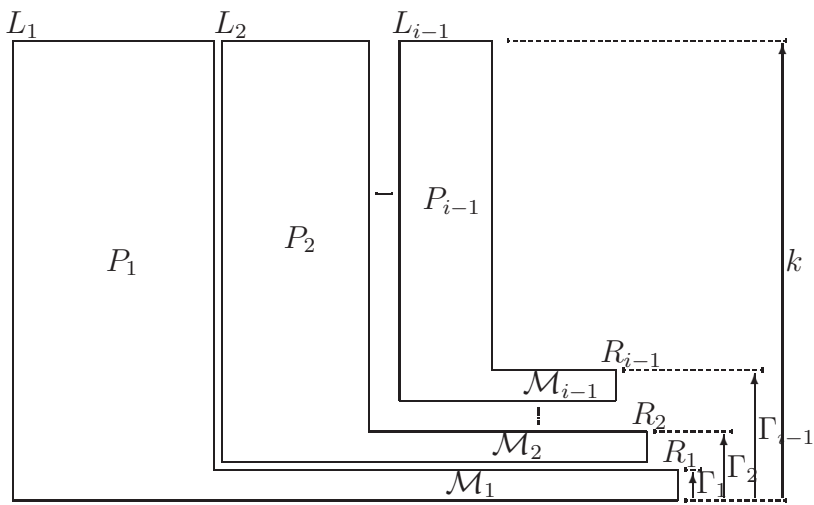
\begin{figure}[H] \setlength{\unitlength}{0.08in}
\centering
\begin{picture}(50,30) 


\put(0,0){\line(1,0){43}}
\put(43,0){\line(0,1){2}}
\put(43,2){\line(-1,0){30}}
\put(13,2){\line(0,1){28}}
\put(13,30){\line(-1,0){13}}
\put(0,30){\line(0,-1){30}}

\put(-0.5,30.5){$L_{1}$}
\put(42,2.5){$R_{1}$}
\put(6,15){$P_{1}$}
\put(28,0.5){$\mathcal{M}_{1}$}


\put(13.5,2.5){\line(1,0){27.5}}
\put(41,2.5){\line(0,1){2}}
\put(41,4.5){\line(-1,0){18}}
\put(23,4.5){\line(0,1){25.5}}
\put(23,30){\line(-1,0){9.5}}
\put(13.5,30){\line(0,-1){27.5}}

\put(13,30.5){$L_{2}$}
\put(40,5){$R_{2}$}
\put(17.5,16.5){$P_{2}$}
\put(31,3){$\mathcal{M}_{2}$}


\put(23.5,20){\dashbox{0.1}(1,0)}
\put(34,5){\dashbox{0.1}(0,1)}


\put(25,6.5){\line(1,0){14}}
\put(39,6.5){\line(0,1){2}}
\put(39,8.5){\line(-1,0){8}}
\put(31,8.5){\line(0,1){21.5}}
\put(31,30){\line(-1,0){6}}
\put(25,30){\line(0,-1){23.5}}

\put(24.5,30.5){$L_{i-1}$}
\put(38,9){$R_{i-1}$}
\put(26.5,19){$P_{i-1}$}
\put(33,7){$\mathcal{M}_{i-1}$}



\put(32,30){\dashbox{0.25}(18,0)}
\put(43.5,0){\dashbox{0.25}(6.5,0)}
\put(43.5,2){\dashbox{0.25}(1,0)}
\put(41.5,4.5){\dashbox{0.25}(5,0)}
\put(39.5,8.5){\dashbox{0.25}(9,0)}

\put(49.8,0.2){\vector(0,1){29.6}}
\put(50,15){$k$}

\put(44,0.2){\vector(0,1){1.6}}
\put(44.2,0.5){$\Gamma_{1}$}

\put(46,0.2){\vector(0,1){4.1}}
\put(46.2,1.5){$\Gamma_{2}$}

\put(48,0.2){\vector(0,1){8.1}}
\put(48.2,4){$\Gamma_{i-1}$}

\end{picture}
\caption{Distribution of intervals in the first $i-1$ separation subphases.}
\label{Fig:Optimum}
\end{figure}

Now, we show when a given $k$-schema $\brac{ \sbrac{j_1,\ldots,j_n}, \sbrac{x_1,\ldots,x_n} }$ actually describes a valid strategy for Presenter,
\ie{} when Presenter is able to force Algorithm to use at least $x_i$ new colors in the $i$-th subphase.

\medskip
During the $i$-th subphase, we have $L_i + s_i \leq p_i \leq R_i$.
Thus, the distance between rightmost right endpoint of an interval introduced in the $i$-th subphase and leftmost right endpoint of interval introduced in this phase is at most $s_i$, hence all intervals introduced by Presenter in the $i$-th subphase form a clique.
See \Cref{Fig:Subphases}.

Note that to the right of $l_{i-1}$ there are only marked intervals from subphases $1,\ldots,i-1$.
Each interval introduced in the $i$-th subphase intersects with every interval previously introduced in the $i$-th subphase and all marked intervals from subphases $1,\ldots,i-1$.
Thus, if Presenter introduces at most $\frac{k}{j_i}\brac{k - \Gamma_{i-1}}$ intervals in the $i$-th subphase, then all intervals can be colored with $k$ colors.

Intervals introduced in the $i$-th subphase intersect with exactly $\chi_{i-1}$ marked intervals from subphases $1,\ldots,i-1$ and by the definition of set $\mathcal{M}$ each of them has a different color.
Thus, in the real interval $[L_i,R_i]$, each color marked in a subphase $1 \leq q < i$ has accumulated bandwidth $\frac{j_q}{k}$.
We assumed that $\forall_{q<i}: j_q < j_i$, hence at most $\frac{k}{j_i}-1$ intervals can be colored with a color $c$ that was used in the previous subphases.
Thus, in the $i$-th subphase Algorithm can be forced to use at least $\Delta_i = \ceil{\frac{j_i}{k}\brac{ \frac{k}{j_i}\brac{k-\Gamma_{i-1}} - \chi_{i-1}\brac{\frac{k}{j_i}-1} }} = k - \Gamma_{i-1} - \chi_{i-1} + \ceil{\frac{j_i}{k}\chi_{i-1}}$ new colors.

After $i$-th subphase, Presenter assigns colors to the intervals introduced in the $i$-th subphase.
First, to the new marked intervals using greedy algorithm.
Then to the non-marked intervals from $i$-th subphase using at most $k$ colors in total.
Note that some intervals might be colored with some of already used $\Gamma_{i-1}$ colors.
Because the number of new marked intervals in the $i$-th subphase is exactly $x_i$, each of them has bandwidth $j_i$ and those intervals are colored using greedy algorithm, then $\Gamma_{i}$ is a properly defined quantity for a given $k$-schema and depends on $j_q$ and $x_q$ for all $q < i$, but does not depend on the Algorithm's coloring.

\begin{definition}
A $k$-schema $\brac{ \sbrac{j_1,\ldots,j_n}, \sbrac{x_1,\ldots,x_n} }$ is a \emph{$k$-strategy} if $\forall_{i}: x_i \leq \Delta_i$.
\end{definition}

Presenter using a $k$-strategy $\brac{ \sbrac{j_1,\ldots,j_n}, \sbrac{x_1,\ldots,x_n} }$ forces Algorithm to use at least  $\Sigma_{q=1}^{n}x_{q} + 3\brac{k-\Gamma_{n}}-2$ colors, while the set of introduced intervals is $k$-colorable.

\begin{example}
For a fixed $k \in \mathbb{N}_{+}$ and a $k$-strategy $\brac{ \sbrac{1}, \sbrac{k} }$ we have $\Gamma_{1} = 1$.
Presenter using this strategy forces Algorithm to use at least $k+3\brac{k-1}-2 = 4k-5$ colors, while the set of introduced intervals is $k$-colorable.
Thus, the asymptotic competitive ratio in the on-line coloring of intervals with bandwidth is at least $4$.
\end{example}

\begin{table}[H]
  \caption{Example of a strategy $S_{120}$.}
  \label{fig:Ex120}
  \begin{tabularx}{0.7\textwidth}{*{14}{|Y}|}\hline
    $j_i$                    & $1$   & $2$ & $3$ & $4$ & $5$ & $6$ & $8$ & $10$ & $12$ & $15$ & $20$ & $24$ & $30$  \\ \hline
    $x_i$                    & $120$ & $1$ & $1$ & $1$ & $1$ & $1$ & $2$ & $2$  & $2$  & $4$  & $5$  & $4$  & $8$   \\ \hline
    $\Gamma_{i}$             & $1$   & $2$ & $2$ & $2$ & $2$ & $2$ & $2$ & $2$  & $2$  & $3$  & $4$  & $4$  & $6$   \\ \hline
  \end{tabularx}
\end{table}

\begin{example}\label{Ex120}
Consider the $120$-strategy given by the values $j_i$, $x_i$ and $\Gamma_i$ from the \Cref{fig:Ex120}.
Presenter using this strategy forces Algorithm to use $152+3\brac{120-6}-2 = 492$ colors, while the set of introduced intervals is $120$-colorable.
Thus, the absolute competitive ratio for the on-line coloring of intervals with bandwidth is at least $4\frac{1}{10}$.
\end{example}

\Cref{Ex120} is an example of a $k$-strategy for Presenter for a fixed $k$, and gives a lower bound for the absolute competitive ratio.
In order to give lower bounds for the asymptotic competitive ratio, we introduce a notion of a \emph{scalable strategy}.

\newpage
\begin{definition}
For a $k$-strategy $S_k = \brac{ \sbrac{j_1,\ldots,j_n}, \sbrac{x_1,\ldots,x_n} }$,
an $ak$-schema $S_{k}^{a} = \brac{ \sbrac{aj_1,\ldots,aj_n}, \sbrac{ax_1,\ldots,ax_n} }$ for $a \in \mathbb{N}_{+}$ is called an $a$-scaled $S_k$ schema.
\end{definition}

Note that $a$-scaled $k$-strategy might not be an $ak$-strategy.
For example $S_{120}$ is a $120$-strategy but $S_{120}^{3}$ is not a $360$-strategy.
To see this, observe that in $S_{120}^{3}$ we have $12 = x_{10} > \Delta_{10} = 11$.

\medskip
Consider a $zk$-scaled $S_{k}$ schema for $z \in \mathbb{N}_{+}$.
We would like to introduce additional constraints on a $k$-strategy $S_k$ that will ensure that $S^{zk}_{k}$ is a $zk^2$-strategy.

\medskip
In the $i$-th subphase of the game played using a schema $S_{k}^{zk}$ Presenter forces Algorithm to use $zkx_i$ new colors.
Each new marked interval has bandwidth $\frac{zkj_i}{zk^2}=\frac{j_i}{k}$ for some $j_i$ such that $j_i|k$.
Thus, these new intervals after $i$-th subphase are colored by Presenter with $\frac{j_i}{k}zkx_i = zj_{i}x_{i}$ colors.
By induction, for the $S_{k}^{zk}$ schema we have $\Gamma_{i} = z\Sigma_{q=1}^{i}j_{q}x_{q}$ and $\chi_{i} = zk\Sigma_{q=1}^{i}x_{q}$.
Presenter using this schema in the $i$-th subphase can introduce $\frac{k}{j_i}\brac{zk^2 - \Gamma_{i-1}} = \frac{zk}{j_i}\brac{k^2 - \Sigma_{q=1}^{i-1}j_{q}x_{q}}$ intervals.
Moreover, at most $\frac{k}{j_i}-1$ intervals can be colored by Algorithm with an already used color $c$.
Thus, in the $i$-th subphase Algorithm is forced to use at least $\Delta_{i} = \ceil{\frac{j_i}{k} \brac{ \frac{zk}{j_i}\brac{k^2 - \Sigma_{q=1}^{i-1}j_{q}x_{q}} -  zk\Sigma_{q=1}^{i-1}x_{q}\brac{\frac{k}{j_i}-1} }}$ new colors, which after simplifying is $\Delta_{i} = z\brac{ k^2 + \Sigma_{q=1}^{i-1}\brac{j_i - j_q - k}x_{q} }$.
The $S_{k}^{zk}$ schema is a $zk^2$-strategy if $\forall_{i}: zkx_{i} \leq \Delta_{i}$.
Thus, we have a condition

\begin{equation}
\label{Eq:scalable}
x_{i} \leq k + \frac{1}{k}\Sigma_{q=1}^{i-1}\brac{j_i - j_q - k}x_{q}
\end{equation}

\begin{definition}
A $k$-strategy $S_k$ that satisfies \Cref{Eq:scalable} is called a \emph{scalable strategy}.
\end{definition}

Note that \Cref{Eq:scalable} does not depend on $z$.
This leads to the following lemma.

\begin{lemma}
If a $k$-strategy $S_k$ satisfies \Cref{Eq:scalable}, then for every $z \in \mathbb{N}_{+}$ a $zk$-scaled $S_{k}$ schema is a $zk^2$-strategy.
\end{lemma}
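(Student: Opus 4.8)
The plan is to unfold the definition of a $zk^2$-strategy directly for the candidate schema $S_k^{zk}$ and show that its defining inequalities collapse to \Cref{Eq:scalable}, which by hypothesis already holds and carries no dependence on $z$. First I would check that $S_k^{zk}$ is a legitimate $zk^2$-schema at all: writing $\hat{j}_i = zkj_i$ and $\hat{x}_i = zkx_i$, the divisibility requirement $\hat{j}_i \mid zk^2$ is equivalent to $j_i \mid k$, and the monotonicity-and-bound requirement $\hat{j}_q < \hat{j}_i \leq \frac{1}{3}zk^2$ is equivalent, after dividing by $zk$, to $j_q < j_i \leq \frac{1}{3}k$. Both hold because $S_k$ is a $k$-schema, so scaling preserves schema-hood.

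The heart of the argument is to pin down, for the schema $S_k^{zk}$, the two bookkeeping quantities $\Gamma_i$ and $\chi_i$ in closed form, and I would prove by induction on $i$ that $\Gamma_i = z\sum_{q=1}^{i} j_q x_q$ and $\chi_i = zk\sum_{q=1}^{i} x_q$. The second is immediate, since subphase $i$ creates exactly $\hat{x}_i = zkx_i$ new Algorithm-colors and hence that many new marked intervals. The first is the delicate step, and is where I expect the main obstacle to sit. In subphase $i$ there are $zkx_i$ new marked intervals, each of bandwidth $\frac{zkj_i}{zk^2} = \frac{j_i}{k}$; together with the earlier marked intervals they form a clique, so they share a common point $p^{*}$ (pairwise-intersecting intervals on the line always do). By the inductive hypothesis Presenter has packed the earlier marked intervals into $\Gamma_{i-1}$ colors, each saturated to total bandwidth $1$ at $p^{*}$, using $j_q \mid k$ so that $\frac{k}{j_q}$ intervals of bandwidth $\frac{j_q}{k}$ fill a color exactly. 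Consequently the greedy coloring cannot reuse any earlier color for a new interval, and since $j_i \mid k$ it packs precisely $\frac{k}{j_i}$ new intervals per color, opening exactly $\frac{j_i}{k}\cdot zkx_i = zj_ix_i$ fresh colors and giving $\Gamma_i = \Gamma_{i-1} + zj_ix_i$. Verifying that this exact, rather than merely approximate, count survives first-fit, namely that the binding constraint always occurs at $p^{*}$ and that divisibility makes the packing tight, is the only real content.

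With the closed forms in hand, I would substitute $\Gamma_{i-1} = z\sum_{q=1}^{i-1} j_q x_q$ and $\chi_{i-1} = zk\sum_{q=1}^{i-1} x_q$ into the expression for $\Delta_i$ recorded just above the lemma and simplify to $\Delta_i = z\brac{ k^2 + \sum_{q=1}^{i-1}\brac{j_i - j_q - k} x_q }$. By definition $S_k^{zk}$ is a $zk^2$-strategy exactly when its scaled thresholds satisfy $\hat{x}_i \leq \Delta_i$ for every $i$, that is $zkx_i \leq z\brac{ k^2 + \sum_{q=1}^{i-1}\brac{j_i - j_q - k} x_q }$. Dividing through by the positive factor $zk$ cancels every occurrence of $z$ and yields precisely $x_i \leq k + \frac{1}{k}\sum_{q=1}^{i-1}\brac{j_i - j_q - k} x_q$, which is \Cref{Eq:scalable}. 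Since $S_k$ satisfies \Cref{Eq:scalable} by assumption and this inequality is free of $z$, the strategy condition holds for all $i$ simultaneously for every $z \in \mathbb{N}_{+}$, so $S_k^{zk}$ is a $zk^2$-strategy for each $z$, as claimed.
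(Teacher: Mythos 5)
Your proposal is correct and follows essentially the same route as the paper: the paper's ``proof'' is the derivation immediately preceding the lemma, which establishes the closed forms $\Gamma_i = z\sum_{q\leq i} j_q x_q$ and $\chi_i = zk\sum_{q\leq i} x_q$ for the scaled schema, simplifies $\Delta_i$ to $z\brac{k^2 + \sum_{q<i}(j_i - j_q - k)x_q}$, and observes that the condition $zkx_i \leq \Delta_i$ reduces to \Cref{Eq:scalable} with every occurrence of $z$ cancelled. Your additional checks (that scaling preserves schema-hood, and that divisibility makes the greedy packing of marked intervals tight so no earlier color is reused) are details the paper leaves implicit, but they do not change the argument.
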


\begin{lemma}
For every $k,z \in \mathbb{N}_{+}$ and a scalable $k$-strategy $S_{k}$ the competitive ratio guaranteed by the $S_{k}^{zk}$ strategy is not less than the competitive ratio guaranteed by the $S_{k}$ strategy.
\end{lemma}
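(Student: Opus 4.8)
The plan is to write both guaranteed ratios explicitly and compare. The unscaled $k$-strategy $S_k$ forces Algorithm to use at least $\sum_{q=1}^{n} x_q + 3(k - \Gamma_n) - 2$ colours on a $k$-colourable set, so it guarantees the ratio
$$R_1 = \frac{\sum_{q=1}^{n} x_q + 3(k-\Gamma_n) - 2}{k}.$$
By the previous lemma $S_k^{zk}$ is a $zk^2$-strategy; from the computation preceding \Cref{Eq:scalable} its $i$-th subphase creates $zkx_i$ marked intervals and the number of colours on its marked set after the last subphase equals $z\sum_{q=1}^{n} j_q x_q$. Hence $S_k^{zk}$ forces at least $zk\sum_{q=1}^{n} x_q + 3\big(zk^2 - z\sum_{q=1}^{n} j_q x_q\big) - 2$ colours on a $zk^2$-colourable set, guaranteeing
$$R_2 = \frac{zk\sum_{q=1}^{n} x_q + 3zk^2 - 3z\sum_{q=1}^{n} j_q x_q - 2}{zk^2}.$$

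First I would compute $R_2 - R_1$ over the common denominator $zk^2$. The $zk\sum_{q}x_q$ and $3zk^2$ contributions cancel, and a short simplification reduces the numerator to $3z\big(k\Gamma_n - \sum_{q=1}^{n} j_q x_q\big) + 2(zk - 1)$. Since $z,k \in \mathbb{N}_{+}$, the term $2(zk-1)$ is nonnegative, so the whole statement comes down to the single inequality $k\Gamma_n \geq \sum_{q=1}^{n} j_q x_q$, comparing the optimum colouring of the unscaled marked intervals with their total weighted mass.

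The crux, and the step I expect to be the main obstacle, is this inequality, which I would extract from the geometry of $\mathcal{M}$ rather than from any property of Algorithm. All marked intervals pairwise intersect: within a single subphase they form a clique, and every interval of the $i$-th subphase meets all marked intervals of subphases $1,\ldots,i-1$. By the Helly property of intervals on the line they share a common point $p$, where the marked intervals of subphase $q$ contribute total bandwidth $x_q\cdot\frac{j_q}{k}$, so the total bandwidth at $p$ is $\frac{1}{k}\sum_{q=1}^{n} j_q x_q$. As any valid colouring carries bandwidth at most $1$ per colour at $p$, the number $\Gamma_n$ of colours on $\mathcal{M}$ is at least $\frac{1}{k}\sum_{q=1}^{n} j_q x_q$, i.e. $k\Gamma_n \geq \sum_{q=1}^{n} j_q x_q$. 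Substituting this into the numerator gives $R_2 - R_1 \geq 0$. The one delicate bookkeeping point is to confirm that the cancellation leaves exactly $3z(k\Gamma_n - \sum_q j_q x_q) + 2(zk-1)$, so that the additive $-2$ coming from the Kierstead--Trotter term in each ratio is swallowed by $2(zk-1)$ and does not flip the sign for small $z$.
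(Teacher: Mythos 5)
Your proof is correct, but it handles the crucial step differently from the paper. Both arguments begin the same way, by writing out the two guaranteed ratios explicitly. The paper then needs only the one-sided comparison $\bar{\Gamma}_n \leq zk\,\Gamma_n$ between the marked-interval colour counts, justified by noting that the marked set of $S_k^{zk}$ is just $zk$ times as many intervals of the same bandwidths as that of $S_k$, so its greedy colouring uses at most $zk$ times as many colours; plugging this in gives $\frac{1}{zk^2}\bar{X} \geq \frac{1}{k}X + \frac{2}{k} - \frac{2}{zk^2} \geq \frac{1}{k}X$ in two lines. You instead substitute the exact value $\bar{\Gamma}_n = z\sum_{q} j_q x_q$ (which is legitimately available from the induction preceding \Cref{Eq:scalable}) and then establish the lower bound $k\Gamma_n \geq \sum_{q} j_q x_q$ by a separate geometric argument: the marked intervals pairwise intersect (clique within each subphase, and each subphase's intervals meet all earlier marked intervals), so by the Helly property they share a point carrying total bandwidth $\frac{1}{k}\sum_q j_q x_q$, which forces that many colours in any valid colouring of $\mathcal{M}$, in particular Presenter's. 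Your bookkeeping checks out: the difference of ratios is $\frac{3z\left(k\Gamma_n - \sum_q j_q x_q\right) + 2(zk-1)}{zk^2} \geq 0$, and $2(zk-1)$ does absorb the two additive $-2$'s. What your route buys is a colouring-independent certificate for $\Gamma_n$ that does not depend on the greedy rule; what it costs is reliance on the exact formula for $\bar{\Gamma}_n$, where the paper gets by with a cruder comparison. One cosmetic remark: the paper asserts a strict inequality, which actually degenerates to equality when $zk=1$; your weak inequality is exactly what the lemma as stated requires.
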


\begin{proof}
Let $S_k = \brac{ \sbrac{j_1,\ldots,j_n}, \sbrac{x_1,\ldots,x_n} }$.
Presenter using a strategy $S_k$ forces Algorithm to use $X = \Sigma_{i=1}^{n}x_i + 3\brac{k-\Gamma_{n}} - 2$ colors, while the set of intervals introduced by Presenter is $k$-colorable.
Presenter using a strategy $S_{k}^{zk}$ forces Algorithm to use $\bar{X} = \Sigma_{i=1}^{n}zkx_i + 3\brac{zk^2-\bar{\Gamma}_{n}} - 2$ colors, while the set of intervals introduced by Presenter is $zk^2$-colorable.
Observe that the number of colors required in greedy coloring of all intervals marked by $S_{k}^{zk}$ strategy is at most $zk$ times bigger than the number of colors required in greedy coloring of all intervals marked by $S_{k}$ strategy, \ie{} $\bar{\Gamma}_{n} \leq zk\Gamma_{n}$.
Thus, we have $\frac{1}{zk^2}\bar{X} \geq \frac{1}{k}X + \frac{2}{k} - \frac{2}{zk^2} > \frac{1}{k}X$.

\end{proof}

\begin{table}[H]
  \caption{Example of a scalable strategy $\bar{S}_{120}$}
  \label{fig:Ex120Scalable}
  \begin{tabularx}{0.7\textwidth}{*{18}{|Y}|}\hline
    $j_i$                    & $1$   & $2$ & $3$ & $4$ & $5$ & $6$ & $8$ & $10$ & $12$ & $15$ & $20$ & $24$ & $30$  \\ \hline
    $x_i$                    & $120$ & $1$ & $1$ & $1$ & $1$ & $1$ & $2$ & $2$  & $2$  & $3$  & $6$  & $4$  & $8$   \\ \hline
    $\Gamma_{i}$             & $1$   & $2$ & $2$ & $2$ & $2$ & $2$ & $2$ & $2$  & $2$  & $3$  & $4$  & $4$  & $6$   \\ \hline
  \end{tabularx}
\end{table}

\begin{example}
\Cref{fig:Ex120Scalable} is a description of a scalable strategy with competitive ratio $4\frac{1}{10}$.
This strategy implies a lower bound of $4\frac{1}{10} + \frac{2}{120} = 4\frac{7}{60}$ for the asymptotic competitive ratio in the on-line coloring of intervals with bandwidth.
\end{example}

In order to obtain the best lower bound for the asymptotic competitive ratio we can chose $k$ to be a \emph{highly composed number}.
As a sequence $j_1,\ldots,j_n$ we chose consecutive divisors of $k$ and as a sequence $x_1,\ldots,x_n$ we greedily choose the maximum numbers $x_i$ such that the resulting strategy is a scalable strategy.

\medskip
\Cref{fig:ListOfCoefs} contains the list of lower bounds for the asymptotic competitive ratio we got for some values of $k$ using this method.

\begin{table}[H]
  \caption{A table of asymptotic competitive ratios for different values of $k$}
  \label{fig:ListOfCoefs}
  \begin{tabular}{ | l | c | }\hline
    $k$  &  ratio  \\ \hline
    $60$  &  $4.0500000$ \\ \hline
    $120$  &  $4.1166667$ \\ \hline
    $360$  &  $4.1416667$ \\ \hline
    $840$  &  $4.1523809$ \\ \hline
    $2520$  &  $4.1587301$ \\ \hline
    $7560$  &  $4.1607142$ \\ \hline
    $10080$  &  $4.1611111$ \\ \hline
    $15120$  &  $4.1614417$ \\ \hline
    $25200$  &  $4.1615873$ \\ \hline
    $27720$  &  $4.1618326$ \\ \hline
    $110880$  &  $4.1621753$ \\ \hline
    $554400$  &  $4.1622763$ \\ \hline
  \end{tabular}
  \begin{tabular}{ | l | c | }\hline
    $k$  &  ratio  \\ \hline
    $2162160$  &  $4.1624500$ \\ \hline
    $21621600$  &  $4.1624777$ \\ \hline
    $183783600$  &  $4.1625239$ \\ \hline
    $2327925600$  &  $4.1625617$ \\ \hline
    $48886437600$  &  $4.1625717$ \\ \hline
    $321253732800$  &  $4.1625883$ \\ \hline
    $4497552259200$  &  $4.1625893$ \\ \hline
    $97821761637600$  &  $4.1625961$ \\ \hline
    $866421317361600$  &  $4.1626008$ \\ \hline
    $4043299481020800$  &  $4.1626015$ \\ \hline
    $12129898443062400$  &  $4.1626018$ \\ \hline
    $224403121196654400$  &  $4.1626043$ \\ \hline
  \end{tabular}
\end{table}

\begin{theorem}
Asymptotic competitive ratio for the on-line coloring of intervals with bandwidth is at least $4.1626$.
\end{theorem}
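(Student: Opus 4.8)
The plan is to reduce the statement to producing a single scalable $k$-strategy, for one sufficiently large highly composite $k$, whose guaranteed ratio exceeds $4.1626$, and then to read the required witness off \Cref{fig:ListOfCoefs}. The two lemmas above already carry the structural weight: a scalable $k$-strategy $S_k$ yields a valid $zk^2$-strategy $S_k^{zk}$ for every $z\in\mathbb{N}_{+}$, and the ratio guaranteed by $S_k^{zk}$ is at least that of $S_k$. First I would make the asymptotic bound explicit by letting $z\to\infty$. Played against any Algorithm, $S_k^{zk}$ produces a $zk^2$-colorable set of intervals while forcing $\bar X=\Sigma_{i=1}^{n}zkx_i+3\brac{zk^2-\bar\Gamma_n}-2$ colors; dividing by $zk^2$ and using $\bar\Gamma_n\le zk\Gamma_n$ as in the proof of the preceding lemma gives $\frac{\bar X}{zk^2}\ge\frac1kX+\frac2k-\frac{2}{zk^2}$, with $X=\Sigma_{i=1}^{n}x_i+3\brac{k-\Gamma_n}-2$. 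As $z\to\infty$ the optimum $zk^2$ tends to infinity while this lower bound tends to $\frac Xk+\frac2k=\frac{\Sigma_{i=1}^{n}x_i+3\brac{k-\Gamma_n}}{k}$, so the family witnesses that the asymptotic competitive ratio is at least $\frac{\Sigma_{i=1}^{n}x_i+3\brac{k-\Gamma_n}}{k}$. It therefore suffices to exhibit one scalable $k$-strategy with this value above $4.1626$.

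Next I would build such a strategy following the recipe stated before the theorem. Take $k=224403121196654400$, the last entry of \Cref{fig:ListOfCoefs}, let $j_1<\dots<j_n$ list the divisors of $k$ with $j_i\le\frac13k$, and fix the multiplicities greedily: put $x_1=k$ and, for $i>1$, let $x_i$ be the largest positive integer keeping the prefix a scalable $k$-strategy, namely $x_i=\min\brac{\Delta_i,\ \floor{k+\frac1k\Sigma_{q=1}^{i-1}\brac{j_i-j_q-k}x_q}}$, where $\Gamma_i$ and $\chi_i$ entering $\Delta_i$ are the quantities already shown to be well defined for a schema. Every chosen $x_i$ then satisfies \Cref{Eq:scalable}, so $S_k$ is scalable and the first lemma applies.

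Finally I would evaluate $\frac{\Sigma_{i=1}^{n}x_i+3\brac{k-\Gamma_n}}{k}$ for this strategy. This is a finite arithmetic computation along the divisor list of $k$: sweeping $i=1,\dots,n$ while maintaining the running sums $\Sigma_{q}j_qx_q$ and $\Sigma_{q}x_q$ that determine $\Gamma_i$, $\chi_i$, $\Delta_i$, and the next $x_i$. For the chosen $k$ the value comes out to $4.1626043>4.1626$, finishing the proof; the remaining rows of \Cref{fig:ListOfCoefs} record the same computation for smaller $k$ and display the increasing trend toward the supremum.

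I expect the principal obstacle to be computational rather than conceptual. Because $k$ is astronomically large, both the divisor enumeration and the greedy recursion have to be certified by machine, and one must check that each greedily chosen $x_i$ is a positive integer so that the resulting pair of sequences is a genuine $k$-schema, and that the ceilings in $\Delta_i$ and the floor in \Cref{Eq:scalable} are handled exactly. A secondary point to record cleanly is that for these highly composite choices \Cref{Eq:scalable} is the binding constraint, so the greedy strategy is simultaneously a $k$-strategy and scalable; once this is verified the two lemmas deliver the bound at once.
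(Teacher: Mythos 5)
Your proposal is correct and follows essentially the same route as the paper: the paper leaves the theorem's proof implicit, relying on the two preceding lemmas, the greedy construction of a scalable $k$-strategy over the divisors of a highly composite $k$, and the machine-computed entry $4.1626043$ for $k=224403121196654400$ in \Cref{fig:ListOfCoefs}. You merely make explicit the $z\to\infty$ limit yielding the bound $\frac{1}{k}\brac{\Sigma_{i=1}^{n}x_i + 3\brac{k-\Gamma_n}}$, which the paper only illustrates via the $4\frac{1}{10}+\frac{2}{120}$ example.
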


\newpage
\section{Unit intervals coloring}

\begin{theorem}
For every $k \in \mathbb{N}_{+}$, there is a strategy for Presenter that forces Algorithm to use at least $2k - 1$
different colors in the on-line coloring of unit intervals with bandwidth played on a $k$-colorable set of intervals.
\end{theorem}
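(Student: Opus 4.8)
The plan is to reuse the \emph{separation phase} developed above as the sole engine for forcing colors, and to replace the Kierstead--Trotter \emph{final phase} by a single clique of unit intervals of bandwidth $1$. Concretely, I would run the separation phase of the strategy based on the $k$-schema $\brac{\sbrac{1},\sbrac{k}}$, in which every introduced interval has bandwidth $\frac{1}{k}$ and length $s_1=1$; since then $R_1-L_1=2s_1$ forces all right endpoints into a window of width at most $s_1=1$, this subphase is literally a game on unit intervals. As established above it forces Algorithm to use $k$ new colors, and the $k$ \emph{marked} intervals (one per color) pairwise intersect, so by the one-dimensional Helly property they share a common point $P$, which I take to be the final value of $r_1$. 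The binary-search bookkeeping places every \emph{non-marked} interval strictly to the left of $P$: its right endpoint is at most the final value of $l_1$, which is strictly smaller than $P$. In the second phase I would then introduce $k-1$ unit intervals of bandwidth $1$ whose left endpoints lie in a tiny window $\sbrac{P-\delta,P}$; they all contain $P$, they pairwise intersect, and for $\delta$ small enough they are disjoint from every non-marked interval.

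To see that this forces $2k-1$ colors, recall that the separation phase already forces $k$ colors. When the $k-1$ large intervals arrive, each of them contains $P$, and at $P$ every one of the $k$ colors is represented by its marked interval carrying load $\frac{1}{k}$; adding a bandwidth-$1$ interval to any such color would raise the load at $P$ above $1$, so none of the $k$ old colors is available. Since the $k-1$ large intervals pairwise intersect and each has bandwidth $1$, no two of them may share a color either. Hence Algorithm is forced to use $k-1$ further pairwise distinct new colors, for a total of at least $k+\brac{k-1}=2k-1$.

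It remains to certify that the whole instance is $k$-colorable, and this is the crux. I would color the $k-1$ large intervals with the colors $1,\dots,k-1$ (they form a clique, so this is legal), and color all small intervals by the explicit $k$-coloring that the separation phase already guarantees, in which all $k$ marked intervals receive one common color --- legal because their total load is $k\cdot\frac{1}{k}=1$ --- which I reserve as color $k$. The two colorings are compatible: among the small intervals the large ones meet only the marked intervals (they were placed to the right of every non-marked interval), and those occupy only color $k$, which the large intervals avoid; the non-marked intervals never meet a large interval, so their colors create no conflict there. The main obstacle is exactly this reconciliation: forcing demands that every large interval be blocked from all $k$ old colors, which is why they must all pass through the common point $P$, yet $k$-colorability forbids placing $k$ large intervals there, since that would consume all $k$ colors and leave the marked load at $P$ uncoverable. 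The construction survives only because a single color can absorb the entire marked clique, capping the number of large intervals at $k-1$ and yielding exactly $2k-1$. The one routine point still to check is that the separation analysis, stated above for general interval lengths, indeed forces $k$ colors and supplies its $k$-coloring when specialised to the fixed unit length $s_1=1$.
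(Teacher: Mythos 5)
Your proposal is correct and follows essentially the same route as the paper: play only the separation phase of the $k$-strategy $\brac{\sbrac{1},\sbrac{k}}$ (where all intervals are automatically unit length), then stab the common point of the $k$ marked intervals with $k-1$ pairwise-intersecting unit intervals of bandwidth $1$, and certify $k$-colorability by packing all marked intervals into a single color. The only (immaterial) differences are that you anchor the second phase at $r_1$ rather than at $p_1=\frac{1}{2}(l_1+r_1)$ and spread the $k-1$ heavy intervals over a small window instead of stacking them at one position.
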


\begin{proof}
For a given $k \in \mathbb{N}_{+}$, Presenter at first plays only the separation phase of a $k$-strategy $\brac{ \sbrac{1}, \sbrac{k} }$.
Because $L_1 = 0$, $R_1 = 2$ and $s_1 = \frac{1}{2}(R_1-L_1)$, every introduced interval has length $1$.
Moreover, there is a point $p_1 = \frac{1}{2}(l_1+r_1)$ such that every marked interval has its right endpoint to the right of $p_1$ and every non-marked interval has its right endpoint to the left of $p_1$.

Now, Presenter introduces $k - 1$ intervals $[p_1,p_1+1]$ of bandwidth $1$ each.
Every interval introduced in this phase gets a new color.
Thus, Algorithm uses $|\mathcal{M}| + k - 1 = 2k - 1$ colors in total, while the introduced set of intervals is $k$-colorable.

\end{proof}

\bibliographystyle{plainurl}
\bibliography{paper}
\end{document}